\documentclass[leqno]{amsart}
\usepackage{amsmath}
\usepackage{amssymb}
\usepackage{amsthm}
\usepackage{enumerate}
\usepackage[mathscr]{eucal}
\usepackage{graphicx}
\theoremstyle{plain}
\newtheorem{theorem}{Theorem}[section]
\newtheorem{prop}[theorem]{Proposition}
\newtheorem{cor}{Corollary}[theorem]
\newtheorem{lemma}{Lemma}[section]

\theoremstyle{definition}
\newtheorem{definition}{Definition}[section]
\newtheorem{remark}{Remark}[section]


\usepackage[pagewise]{lineno}

\begin{document}
\title[On the numerical range of  operators on some special Banach spaces ]{On the numerical range of  operators on some special Banach spaces }
\author{  Kalidas Mandal, Aniket Bhanja, Santanu Bag  and  Kallol paul }

\newcommand{\acr}{\newline\indent}

\address[Mandal]{Department of Mathematics\\ Jadavpur University\\ Kolkata 700032\\ West Bengal\\ INDIA}
\email{kalidas.mandal14@gmail.com}

\address[Bhanja]{Department of Mathematics\\ Vivekananda College Thakurpukur\\ Kolkata\\ West Bengal\\India\\ }
\email{aniketbhanja219@gmail.com}

\address[Bag]{Department of Mathematics\\ Vivekananda  College for Women, Barisha \\ Kolkata  \\ West Bengal\\ INDIA}
\email{santanumath84@gmail.com}

\address[Paul]{Department of Mathematics\\ Jadavpur University\\ Kolkata 700032\\ West Bengal\\ INDIA}
\email{kalloldada@gmail.com}

\thanks{} 

\subjclass[2010]{Primary 47A12, Secondary 46A55}
\keywords{Semi-inner-product; numerical range; convex set}

\begin{abstract}
	
	The numerical range of a bounded linear operator on a complex Banach space need not be convex unlike that on a Hilbert space. The aim of this paper is to study operators $T$ on $ \ell^2_p $  for which the numerical range is convex.  We also obtain a nice relation between $V(T)$ and $ V(T^t)$ considering $ T \in \mathbb{L} (\ell_p^2) $ and $ T^t \in \mathbb{L} (\ell_q^2) ,$ where $T^t$ denotes the transpose of $T$  and $p$ and $q$ are conjugate real numbers i.e., $ 1 <p,q< \infty $ and $ \frac{1}{p}+\frac{1}{q}=1.$

\end{abstract}

\maketitle
\section{Introduction.}

\medskip

The numerical range of a bounded linear operator on a real or complex Hilbert space is a convex  subset of the scalar field by the well known result of Toeplitz-Hausdroff theorem \cite{H}. On real Banach spaces of dimension ($ \geq 2$) the numerical range of a bounded linear operator  is always  convex \cite{BD}. For operators defined on a complex Banach space the numerical range is connected but not necessarily convex. The purpose of this paper is to find classes of operators on certain special Banach spaces for which the numerical range is convex. Let us first introduce some notations and terminologies.

Let $\mathbb{X} $ and $\mathbb{H}$  denotes complex Banach and  Hilbert spaces respectively.  Let $B_{\mathbb{X}} = \{x \in \mathbb{X} \colon \|x\| \leq 1\}$ and
$S_{\mathbb{X}} = \{x \in \mathbb{X} \colon \|x\|=1\}$ be the unit ball and the unit sphere of $\mathbb{X}$, respectively. Let $\mathbb{L}(\mathbb{X})$ ( $ \mathbb{L}(\mathbb{H}) ) $ denote the  space of all bounded linear operators defined on $\mathbb{X}$ ( $ \mathbb{H}$ ), endowed with the usual operator norm. 

Let $ T \in  \mathbb{L}(\mathbb{H}) $, then  the numerical range, $ W(T) $ and the numerical radius, $w(T) $ are defined as : 
\[W(T)= \{\langle Tx, x\rangle: x\in \mathbb{H}, \|x\|=1 \},\]
\[w(T)= \sup\{|\lambda|: \lambda \in W(T)\}.\] 
For further information on the properties of numerical range and related things one can see \cite{ Ba, B, GR,H,T}.
The  structure of inner product inherently present in Hilbert space helps to study the geometric properties of both $H$ and $ \mathbb{L}(\mathbb{H})$ and that motivated  Lumer \cite{L} to introduce semi inner products (s.i.p.) in normed spaces. This  notion of s.i.p. and its applications in the study of Banach space geometry has grown over the years with contribution from  many mathematicians \cite {D,G}.
This is the proper time to write down the well known definition of s.i.p. in a normed space.

\begin{definition}[\cite{L,G}]
	Let $\mathbb{X}$ be a normed linear space. A function [ , ]: $\mathbb{X}\times \mathbb{X} \rightarrow \mathbb{K}(=\mathbb{C},\mathbb{R})$ is a semi-inner-product if and only if for any $\lambda \in \mathbb{K}$ and for any $x,y,z \in \mathbb{X}$, it satisfies the following properties:\\
	(i) $[x+y , z] = [x , z] +[y , z]$,\\
	(ii) $[\lambda x ,y] = \lambda [x , y]$ \& $[x , \lambda y] = \overline \lambda [x , y]$,\\
	(iii) $[x , x]>0$, when $x\neq 0$,\\
	(iv) $|[x , y]|^2\leq [x , x][y , y]$.\\
	
\end{definition} 
For every normed linear space, there exists a s.i.p. $[~,~]$ which is compatible with the norm (see \cite{G} ), i.e., $[x,x]=\|x\|^2 $ for all $x\in \mathbb{X}.$ Given a normed linear space, there exists many s.i.p.s compatible with the norm. In case of smooth space there exists exactly one s.i.p. compatible with the norm and that s.i.p. is the inner product if it is a Hilbert  space. Let us recall that a normed space $\mathbb{X}$ is said to be smooth if for each $ x \in S_{\mathbb{X}} $ there exists a unique supporting functional at $x$ i.e., there exists a unique linear functional $x^*$ such that $ x^*(x) =1 $ and $  \|x^*\| = 1.$\\
For $ T \in  \mathbb{L}(\mathbb{X}) $, the numerical range $ V(T) $ and the numerical radius $v(T) $ are defined as : 
\[V(T)=\{[Tx, x]:[x,x]=1\},  v(T)= \sup\{|\lambda| : \lambda \in V(T)\}.\]
These are equivalent to the following definition: 
\[ V(T) = \{ x^*(Tx) : \|x \| =1, \|x^*\| =1, x^*(x) =1\}, v(T)= \sup\{|\lambda| : \lambda \in V(T)\}.\]
We denote boundary of $V(T)$ by $\delta V(T).$

There is not much literature available on the structure of $V(T)$ as it is not necessarily convex. The purpose of this paper is to present a class of operators on $\ell_p^2 ( 1 < p < \infty) $  for which the numerical range $V(T)$ is convex. We also obtain a nice relation between $V(T)$ and $ V(T^t)$ considering $ T \in \mathbb{L} (\ell_p^2) $ and $ T^t \in \mathbb{L} (\ell_q^2) ,$ where $T^t$ denotes the transpose of $T$ and $p$ and $q$ are conjugate real numbers i.e., $ 1 <p,q< \infty $ and $ \frac{1}{p}+\frac{1}{q}=1.$

\section{Main Results}   

We begin with the following elementary results on numerical range and numerical radius, the proof of which follows from definitions. 

\begin{prop}\label{prop}\cite{L}
	Let $T_1, T_2$ be any operators on semi-inner-product space $\mathbb{X}$, $I$ be the identity operator and $\alpha , \beta \in \mathbb{K}(=\mathbb{C},\mathbb{R})$. Then we have\\
	$(i)$ $v(T_1)\leq \|T_1\|,$\\
	$(ii)$ $V(\alpha I+\beta T_1 )=\alpha +\beta V(T_1),$\\
	$(iii)$ $ V(T_1+ T_2)\subset V(T_1)+ V(T_2).$
\end{prop}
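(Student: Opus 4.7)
The plan is to verify each of the three items directly from the axioms of the semi-inner-product and the definitions of $V(\cdot)$ and $v(\cdot)$. Since every statement is a one-line computation, the main task is just to identify which axiom does the work in each case.

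For (i), I would fix $\lambda \in V(T_1)$ and choose $x \in \mathbb{X}$ with $[x,x]=1$ such that $\lambda = [T_1x,x]$. Applying the Cauchy--Schwarz-type inequality, axiom (iv) of the s.i.p. definition, I get $|\lambda|^2 = |[T_1x,x]|^2 \leq [T_1x,T_1x]\,[x,x] = \|T_1x\|^2$. Since $\|x\|=1$, this is bounded by $\|T_1\|^2$, so $|\lambda| \leq \|T_1\|$, and taking the supremum over $\lambda \in V(T_1)$ gives $v(T_1) \leq \|T_1\|$.

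For (ii), I would take any $x$ with $[x,x]=1$ and expand using additivity and homogeneity in the first slot (axioms (i) and (ii)):
\[
[(\alpha I + \beta T_1)x, x] = \alpha [x,x] + \beta [T_1 x, x] = \alpha + \beta [T_1 x, x].
\]
As $x$ ranges over unit vectors, $[T_1 x, x]$ ranges over $V(T_1)$, which yields the set equality $V(\alpha I + \beta T_1) = \alpha + \beta V(T_1)$. For (iii), the same additivity axiom gives $[(T_1+T_2)x,x] = [T_1 x,x] + [T_2 x,x]$ for each unit vector $x$, and the right-hand side lies in $V(T_1)+V(T_2)$ by definition; taking the union over all such $x$ yields the desired inclusion.

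There is no real obstacle here: each claim reduces to a single application of the s.i.p. axioms, and no appeal to smoothness, completeness, or the particular structure of $\ell_p^2$ is needed. The only minor subtlety worth flagging is that in (iii) one only obtains an inclusion rather than equality, because different unit vectors $x$ may be required to realize a given pair $([T_1x,x],[T_2x,x])$ as individual elements of $V(T_1)$ and $V(T_2)$.
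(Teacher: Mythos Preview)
Your proof is correct and matches the paper's approach: the paper simply states that the proposition follows directly from the definitions (and cites Lumer), and your argument supplies exactly those routine verifications from the s.i.p.\ axioms. There is nothing to add or correct.
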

	Let $\ell_p, 1 < p < \infty$ be the complex normed space of all $p-$summable sequences, with the norm\\
	\[\|\tilde{x}\|_p=\bigg (\sum_{k=1}^{\infty} |x_k|^p\bigg)^\frac{1}{p},\]
	where $\tilde{x}=(x_1,x_2,x_3,....)\in \ell_p.$  Given any two elements $\tilde{x}=(x_1,x_2,x_3,....), \tilde{y}=(y_1,y_2,y_3,....)\in \ell_p, $ the unique semi-inner-product can be easily seen to be 
	\begin{eqnarray*}
		 [\tilde{x},\tilde{y}]_p & =& \frac{\sum_{k=1}^{\infty}x_k\overline{y_k}|y_k|^{p-2}}{\|\tilde{y}\|_p^{p-2}}, \tilde{y} \neq 0\\
		 & = & 0, \tilde{y}=0.
		 \end{eqnarray*}
	where $\overline{y_k}$ denotes the complex conjugate of $y_k,$ that generates the norm $\|.\|_p.$ We denote $n$-dimensional $\ell_p$ space by $\ell^n_p. $ \\
	We note that if $ D $ is  an $n\times n$ diagonal matrix with complex entries $d_{ii},$ then $V(D)$  is the convex hull of its entries  and $v(D)=\max \{|d_{ii}|:1\leq i \leq n\}$.

We now prove first result which states that numerical range of a nilpotent operator on $\ell^2_p $ is convex.

\begin{theorem}\label{2-d}
Let $T\in \mathbb{L}(\ell^2_p)$ be such that $ T =
\begin{pmatrix}
0 & 1 \\ 
\rule{0em}{3ex}0 & \hphantom{-} 0
\end{pmatrix} $. Then  $V(T)$ is a circular disc with center at origin and radius $v(T)= (\frac{1}{p})^{\frac{1}{p}}(\frac{1}{q})^{\frac{1}{q}}.$
\end{theorem}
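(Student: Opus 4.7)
The plan is to compute $V(T)$ directly from the semi-inner-product formula on $\ell_p^2$ and then extract the shape from the parameterization. For $\tilde{x}=(x_1,x_2)$ with $\|\tilde x\|_p=1$, we have $T\tilde x=(x_2,0)$, so the defining formula immediately gives
\[
[T\tilde{x},\tilde{x}]_p \;=\; x_2\,\overline{x_1}\,|x_1|^{p-2}.
\]
Writing $x_1=re^{i\alpha}$ and $x_2=se^{i\beta}$ with $r,s\ge 0$ and $r^p+s^p=1$, this becomes $sr^{p-1}e^{i(\beta-\alpha)}$. Since the phase $\beta-\alpha$ is an independent free parameter, $V(T)$ is rotationally invariant about the origin.

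Next, I would argue that $V(T)$ is the \emph{full} closed disc of radius $M:=\max\{sr^{p-1}:r^p+s^p=1,\ r,s\ge 0\}$. For this, fix any angle $\theta$; as $r$ varies continuously over $[0,1]$ with $s=(1-r^p)^{1/p}$, the modulus $sr^{p-1}$ is a continuous function that vanishes at both endpoints and attains $M$ in between, so by the intermediate value theorem it takes every value in $[0,M]$. Combined with the free phase, this produces every element of the closed disc of radius $M$.

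It remains to compute $M$ explicitly. Setting $f(r)=(1-r^p)^{1/p}r^{p-1}$ and differentiating $\log f$ yields the critical point equation $(p-1)(1-r^p)=r^p$, hence
\[
r^p=\frac{p-1}{p}=\frac{1}{q},\qquad s^p=1-\frac{1}{q}=\frac{1}{p}.
\]
Substituting and using $(p-1)/p=1/q$ gives
\[
M \;=\; s\,r^{p-1} \;=\; \Bigl(\tfrac{1}{p}\Bigr)^{1/p}\Bigl(\tfrac{1}{q}\Bigr)^{(p-1)/p} \;=\; \Bigl(\tfrac{1}{p}\Bigr)^{1/p}\Bigl(\tfrac{1}{q}\Bigr)^{1/q},
\]
which is the claimed value of $v(T)$. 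Assembling these steps finishes the proof.

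The main step that requires care is the second one: one must be sure that $V(T)$ is not merely contained in the disc of radius $M$ but equals it. The argument above via continuity of $sr^{p-1}$ in $r$ is the right mechanism, but it must be invoked explicitly since $V(T)$ is defined as a set of semi-inner-products, not a priori as a disc. The optimization itself in the third step is a routine one-variable calculus computation.
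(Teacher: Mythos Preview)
Your proof is correct and follows essentially the same approach as the paper: compute $[T\tilde x,\tilde x]_p=x_2\overline{x_1}|x_1|^{p-2}$, write it in polar form to see a free phase and a radial modulus depending only on $|x_1|,|x_2|$, and then optimize that modulus. The only cosmetic difference is that the paper parameterizes the unit sphere by $x_1=\cos^{2/p}\theta\,e^{i\alpha}$, $x_2=\sin^{2/p}\theta\,e^{i\beta}$ instead of your $(r,s)$ with $r^p+s^p=1$; your explicit intermediate-value argument that the modulus sweeps all of $[0,M]$ is in fact a point the paper glosses over.
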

\begin{proof} Let  $x=(x_1 , x_2) \in S_{\ell^2_p}$, 
 $x_1 ={\cos^{\frac{2}{p}} \theta} ~e^{i\alpha}$ and $x_2 ={\sin^{\frac{2}{p}} \theta} ~e^{i\beta}$, where $\theta , \alpha , \beta \in [0, 2\pi].$  Then 
$[Tx,x]_p =x_{2} \overline {x_{1}} |x_{1}|^{p-2}={\sin}^{\frac{2}{p}}\theta  {\cos^{\frac{2}{q}}\theta}  e^{i\phi}$, where $\phi=\beta-\alpha$ and $q=\frac{p}{p-1}.$ 
Therefore, real part of $[Tx,x]_p=\Re {[Tx,x]_p}={\sin}^{\frac{2}{p}}\theta  {\cos^{\frac{2}{q}}\theta} \cos \phi$
 and 
imaginary part of $[Tx,x]_p=\Im {[Tx,x]_p}={\sin}^{\frac{2}{p}}\theta  {\cos^{\frac{2}{q}}\theta} \sin \phi.$
Hence 
	$$(\Re {[Tx,x]_p})^2+(\Im {[Tx,x]_p})^2=  ({\sin}^{\frac{2}{p}}\theta  {\cos^{\frac{2}{q}}\theta})^2.$$
	These are  equations of concentric circles with center at origin. Therefore, the numerical range of $T$ is the disc with radius 
	\[v(T)=\sup_{\theta\in [0,2\pi]}{\sin}^{\frac{2}{p}}\theta  {\cos}^{\frac{2}{q}}\theta \]
	By a simple calculation, it can be seen that $v(T)= (\frac{1}{p})^{\frac{1}{p}}(\frac{1}{q})^{\frac{1}{q}}.$\\
	\end{proof}

By proposition (ii) of \ref{prop} and Theorem \ref{2-d}, we get the following two  theorems.    

 	\begin{theorem}
Let $T\in \mathbb {L}(\ell_p^2)$ be such that $T = \begin{pmatrix}
		\alpha & \beta  \\
		0 & \alpha \\
	\end{pmatrix}$, where $\alpha ,\beta \in \mathbb{K}$. Then $ V(T) $ is a disc with center at $\alpha$ and radius $|\beta|(\frac{1}{p})^{\frac{1}{p}}(\frac{1}{q})^{\frac{1}{q}}.$ 
\end{theorem}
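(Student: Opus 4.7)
The plan is to reduce this directly to Theorem \ref{2-d} via the affine transformation rule in Proposition \ref{prop}(ii). Observe that the given upper-triangular matrix decomposes as
\[
T \;=\; \alpha I + \beta T_{0}, \qquad \text{where } T_{0} = \begin{pmatrix} 0 & 1 \\ 0 & 0 \end{pmatrix}.
\]
So the first step is simply to recognize $T$ in the form $\alpha I + \beta T_0$ with $T_0$ being exactly the nilpotent operator handled in Theorem \ref{2-d}.

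Next, I would invoke Proposition \ref{prop}(ii), which gives
\[
V(T) \;=\; V(\alpha I + \beta T_0) \;=\; \alpha + \beta \, V(T_0).
\]
By Theorem \ref{2-d}, $V(T_0)$ is the closed disc centered at the origin with radius $(1/p)^{1/p}(1/q)^{1/q}$. Multiplication by the scalar $\beta$ maps this disc to the disc centered at the origin with radius $|\beta|(1/p)^{1/p}(1/q)^{1/q}$ (the argument of $\beta$ contributes only a rotation about the center, which is $0$), and adding $\alpha$ translates the center to $\alpha$ without altering the radius.

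There is really no obstacle here; the only thing to be careful about is to observe that Proposition \ref{prop}(ii) is stated in the excerpt without any restriction on the scalars $\alpha,\beta$, so the identity $V(\alpha I+\beta T_0)=\alpha+\beta V(T_0)$ is directly applicable. Combining these two steps yields immediately that $V(T)$ is the closed disc centered at $\alpha$ of radius $|\beta|(1/p)^{1/p}(1/q)^{1/q}$, as claimed.
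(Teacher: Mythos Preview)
Your proof is correct and follows exactly the approach the paper indicates: decompose $T=\alpha I+\beta T_0$ and combine Proposition~\ref{prop}(ii) with Theorem~\ref{2-d}. Nothing further is needed.
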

 
 \begin{theorem}
 	Let $T\in \mathbb{L}(\ell^n_p)$ be such that  $T=(t_{ij})_{n\times n}$ where
 	\begin{eqnarray*}
 			t_{ij} & = & 1, (i,j) =  (i_0,j_0)~~and~~ i_0\neq j_0,\\
         	t_{ij} & = & 0, (i,j) \neq  (i_0,j_0).
	\end{eqnarray*}
	Then $V(T)$ is a disc with center at origin and radius $v(T)= (\frac{1}{p})^{\frac{1}{p}}(\frac{1}{q})^{\frac{1}{q}}.$
 \end{theorem}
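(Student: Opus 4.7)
The plan is to reduce to the two-dimensional case already settled in Theorem \ref{2-d}. Since $T$ is the matrix unit with a single $1$ at position $(i_0,j_0)$, we have $Tx = x_{j_0}\, e_{i_0}$ for every $x=(x_1,\dots,x_n)\in \ell^n_p$, where $e_{i_0}$ is the $i_0$-th standard basis vector. Substituting into the explicit formula for the s.i.p. on $\ell^n_p$, I obtain, for each $x\in S_{\ell^n_p}$,
\[
[Tx,x]_p \;=\; x_{j_0}\,\overline{x_{i_0}}\,|x_{i_0}|^{p-2},
\]
which depends only on the two coordinates $x_{i_0}$ and $x_{j_0}$. This is the crucial observation that makes the reduction possible.

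Write $r_0 := (1/p)^{1/p}(1/q)^{1/q}$. To prove that $V(T)$ contains the closed disc of radius $r_0$ centered at the origin, I would restrict to unit vectors $x\in S_{\ell^n_p}$ supported on the two indices $\{i_0,j_0\}$. For such an $x$, the pair $(x_{i_0},x_{j_0})$ lies in $S_{\ell^2_p}$, and the value $[Tx,x]_p$ matches exactly the quantity computed in the proof of Theorem \ref{2-d} for the $2\times 2$ matrix unit. Consequently, every point of this disc is realized.

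For the reverse inclusion, I would take an arbitrary $x\in S_{\ell^n_p}$ and set $a = |x_{i_0}|$, $b = |x_{j_0}|$. Then $a^p+b^p \leq 1$ and $|[Tx,x]_p| = b\,a^{p-1}$. Since the function $(a,b)\mapsto b\,a^{p-1}$ is nondecreasing in each variable on the positive quadrant, its supremum subject to $a^p+b^p \leq 1$ is attained on the curve $a^p+b^p=1$, where the parametrization $a=\cos^{2/p}\theta$, $b=\sin^{2/p}\theta$ from Theorem \ref{2-d} delivers the bound $r_0$. I do not anticipate a serious obstacle: the essential point is just to notice that any mass placed in the remaining coordinates $x_k$ for $k\notin\{i_0,j_0\}$ only drains mass away from $x_{i_0}$ and $x_{j_0}$ and can therefore only shrink $|[Tx,x]_p|$, so the $n$-dimensional numerical range coincides with the one produced by the two-dimensional restriction.
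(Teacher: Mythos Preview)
Your argument is correct and is essentially what the paper has in mind: the paper does not write out a proof but simply asserts that the result follows from Theorem~\ref{2-d} (together with Proposition~\ref{prop}(ii), the latter being needed really only for the companion result on $\begin{pmatrix}\alpha & \beta\\ 0 & \alpha\end{pmatrix}$). Your explicit reduction to the two-dimensional case---restricting to vectors supported on $\{i_0,j_0\}$ for one inclusion, and observing that mass in the remaining coordinates can only decrease $|[Tx,x]_p|=|x_{j_0}|\,|x_{i_0}|^{p-1}$ for the other---is precisely the natural way to supply the details the paper omits.
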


	We now consider the class of operators of the form $T = \begin{pmatrix}
	1 & b  \\
	0 & -1 \\
	\end{pmatrix}$ and proceed step by step to show that the numerical range $V(T)$ is convex. First we prove the following theorem.

\begin{theorem}\label{boundary}
	Let $T\in \mathbb{L}(\ell^2_p)$ be such that $T = \begin{pmatrix}
		1 & b  \\
		0 & -1 \\
	\end{pmatrix}.$ \\
	Then \[V(T)=\bigcup_{\theta\in [0,2\pi]} C(\theta),\] 
	where for each $\theta \in [0, 2 \pi]$, 
	\[C(\theta)= \{(x,y):(x-\cos2\theta)^2+y^2= R^2(\theta)\},\] 
	$$ \mbox{with}~~R(\theta)=|b|(\sin^2 \theta)^\frac{1}{p}(\cos^2\theta)^\frac{1}{q}.$$ 
\end{theorem}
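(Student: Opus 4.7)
The plan is to compute $[Tx,x]_p$ directly in the spherical parametrization used in the proof of Theorem \ref{2-d}, and to observe that the result naturally splits into a real ``diagonal'' part depending only on $\theta$ and a complex ``off-diagonal'' part whose modulus is fixed once $\theta$ is fixed.

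First I would write an arbitrary $x=(x_1,x_2)\in S_{\ell_p^2}$ as
\[
x_1=\cos^{2/p}\theta\;e^{i\alpha},\qquad x_2=\sin^{2/p}\theta\;e^{i\beta},
\qquad \theta\in[0,\pi/2],\ \alpha,\beta\in[0,2\pi],
\]
so that $|x_1|^p=\cos^2\theta$, $|x_2|^p=\sin^2\theta$ and $\|x\|_p=1$. Then $Tx=(x_1+bx_2,-x_2)$, and using the explicit form of the semi-inner-product on $\ell_p^2$ I would compute
\[
[Tx,x]_p=|x_1|^p-|x_2|^p+b\,x_2\,\overline{x_1}\,|x_1|^{p-2}.
\]
The first two terms collapse to $\cos^2\theta-\sin^2\theta=\cos 2\theta$. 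For the last term, the key algebraic step is to check that the exponent of $\cos\theta$ collapses as $\frac{2}{p}+\frac{2(p-2)}{p}=\frac{2(p-1)}{p}=\frac{2}{q}$, giving
\[
[Tx,x]_p=\cos 2\theta+|b|\,\sin^{2/p}\theta\,\cos^{2/q}\theta\;e^{i(\phi+\psi)},
\]
where $\phi=\beta-\alpha$ and $\psi=\arg b$ (with the convention that $\psi=0$ if $b=0$, in which case the theorem is immediate from Proposition \ref{prop}(ii)).

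Next I would fix $\theta$ and let $\phi$ vary over $[0,2\pi]$. Since $\alpha$ and $\beta$ can independently take any value in $[0,2\pi]$, the angle $\phi+\psi$ also ranges freely over $[0,2\pi]$. Writing $[Tx,x]_p=u+iv$, the pair $(u,v)$ therefore traces out exactly the circle
\[
(u-\cos 2\theta)^2+v^2=|b|^2\sin^{4/p}\theta\,\cos^{4/q}\theta=R^2(\theta),
\]
which is $C(\theta)$ in the notation of the statement. This gives both inclusions at the level of a fixed $\theta$: every element of $C(\theta)$ arises as $[Tx,x]_p$ for a suitable $x$, and conversely every $[Tx,x]_p$ with that particular $\theta$ lies on $C(\theta)$.

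Finally, taking the union over $\theta\in[0,\pi/2]$ (equivalently $[0,2\pi]$ as in the statement, since only $\sin^2\theta$, $\cos^2\theta$ and $\cos 2\theta$ appear), and noting that every unit vector is captured by some choice of $(\theta,\alpha,\beta)$, I would conclude $V(T)=\bigcup_\theta C(\theta)$. The only real obstacle is the bookkeeping in the algebraic simplification that produces the exponent $2/q$; once that collapse is made, the rest is a direct reading off of a circle equation.
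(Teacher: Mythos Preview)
Your argument is correct and follows essentially the same route as the paper: parametrize $S_{\ell_p^2}$ by $(\theta,\alpha,\beta)$, compute $[Tx,x]_p=\cos 2\theta+|b|\sin^{2/p}\theta\cos^{2/q}\theta\,e^{i(\phi+\psi)}$, and read off the circle $C(\theta)$ for each fixed $\theta$. Your write-up is slightly more explicit about the two inclusions and the exponent bookkeeping, but there is no substantive difference in method.
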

\begin{proof}
Let  $x=(x_1 , x_2) \in S_{\ell^2_p}$. 
Suppose  $
x_1 =(\cos\theta)^{\frac{2}{p}} e^{i\phi_1}, ~x_2 =(\sin\theta)^{\frac{2}{p}} e^{i\phi_2}, $ where $ \theta,~\phi_1,~\phi_2 \in[0,2\pi].$  Then  $[Tx,x]_p=|x_1|^p+bx_2\overline{x_1}|x_1|^{(p-2)}-|x_2|^p =\cos2\theta+|b|(\sin\theta)^\frac{2}{p}(\cos\theta)^\frac{2}{q}e^{i(\phi_2-\phi_1+\theta_b)},$  where  $  b=|b|e^{i\theta_b}.$
	Let $[Tx,x]_p=h+ik,$ where 
	$h=\cos2\theta+|b|(\sin\theta)^{\frac{2}{p}}(\cos\theta)^\frac{2}{q}\cos\psi$,
	$k=|b|(\sin\theta)^{\frac{2}{p}}(\cos\theta)^\frac{2}{q}\sin\psi$
	and  $\psi=(\phi_2-\phi_1+\theta_b).$
	Eliminating $\psi$ from $h$ and $k$, we get 
\[(h-\cos2\theta)^2+k^2=|b|^2(\sin\theta)^\frac{4}{p}(\cos\theta)^\frac{4}{q}=R^2(\theta)~\mbox{(say)}.\]
  For each  $\theta \in [0, 2 \pi],$ write $C(\theta)= \{(x,y):(x-\cos2\theta)^2+y^2= R^2(\theta)\}.$ 
Therefore, from definition it follows that \[V(T)=\bigcup_{\theta\in [0,2\pi]} C(\theta).\] 

	\end{proof}

 We are now in a position to show that the numerical range of a particular class of operators is convex.
\begin{theorem}\label{convex}
	Let $T\in \mathbb{L}(\ell^2_p)$ be such that $T = \begin{pmatrix}
		1 & b  \\
		0 & -1 \\
	\end{pmatrix}.$ Then $V(T)$ is convex set. 
\end{theorem}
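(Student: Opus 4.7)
The plan is to re-parametrise the family of circles from Theorem~\ref{boundary} so that the centre depends linearly on the parameter and the radius is concave, and then exploit both facts with a direct convex-combination argument.

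After the substitution $s = \sin^2 \theta \in [0,1]$, the circle $C(\theta)$ becomes a circle with affine centre $c(s) = \cos 2\theta = 1 - 2s$ and radius $R(s) = |b|\,s^{1/p}(1-s)^{1/q}$. The first thing I would establish is the slightly stronger statement that $D(c(s), R(s)) \subseteq V(T)$ for every $s \in [0,1]$, where $D$ denotes the closed disk: given $z$ strictly inside $D(c(s_0), R(s_0))$, the continuous function $\phi(s) = |z - c(s)| - R(s)$ satisfies $\phi(s_0) < 0$ while $\phi(0) = |z - 1| \geq 0$ and $\phi(1) = |z + 1| \geq 0$, so (unless $z = \pm 1$, which are in $V(T)$ trivially) the intermediate value theorem yields some $s^{*}$ with $|z - c(s^{*})| = R(s^{*})$, placing $z$ on $C(c(s^{*}), R(s^{*})) \subseteq V(T)$ via Theorem~\ref{boundary}.

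Next, a short logarithmic-differentiation computation using the H\"older identity $1/p + 1/q = 1$ (which makes the cross terms collapse via $1/p - 1 = -1/q$ and $1/q - 1 = -1/p$) yields
\[ \frac{R''(s)}{R(s)} = -\frac{1}{p q \, s^2 (1-s)^2}, \]
so $R$ is strictly concave on $(0,1)$. With these two ingredients in hand, convexity of $V(T)$ is nearly immediate: given $z_1, z_2 \in V(T)$, choose $s_1, s_2$ with $z_i \in D(c(s_i), R(s_i))$; for any $\lambda \in [0,1]$ set $s_\lambda = \lambda s_1 + (1-\lambda) s_2$ and $w = \lambda z_1 + (1-\lambda) z_2$. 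Linearity of $c$ gives $c(s_\lambda) = \lambda c(s_1) + (1-\lambda) c(s_2)$, so the triangle inequality combined with concavity of $R$ yields
\[ |w - c(s_\lambda)| \leq \lambda R(s_1) + (1-\lambda) R(s_2) \leq R(s_\lambda), \]
placing $w$ in $D(c(s_\lambda), R(s_\lambda)) \subseteq V(T)$.

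I expect the main technical obstacle to be the concavity calculation for $R$: the term-by-term algebra after two logarithmic derivatives is easy to mishandle, but the H\"older relation $1/p + 1/q = 1$ is exactly what forces the quadratic cross terms to combine into the perfect square $\bigl[1/s + 1/(1-s)\bigr]^2 = 1/(s^2(1-s)^2)$ displayed above. The disk-filling intermediate value step is routine once one notes that $R$ vanishes at both endpoints $s = 0$ and $s = 1$.
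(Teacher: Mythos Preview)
Your proof is correct and follows essentially the same approach as the paper: both arguments rest on the affine dependence of the circle centres on the parameter (your $s=\sin^2\theta$ is related to the paper's $z=\cos 2\theta$ by $z=1-2s$), the concavity of the radius function, and an intermediate-value step. Your organisation is marginally cleaner in that you isolate the disk-filling lemma before invoking concavity, and you compute $R''/R$ explicitly where the paper merely asserts that $g(z)=\tfrac{|b|}{2}(1-z)^{1/p}(1+z)^{1/q}$ is concave, but the mathematical content is the same.
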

\begin{proof}
By Theorem \ref{boundary}, $V(T)=\bigcup_{\theta\in [0,2\pi]} C(\theta),$
where $C(\theta)= \{(x,y):(x-\cos2\theta)^2+y^2= R^2(\theta)=|b|^2(\sin^2 \theta)^\frac{2}{p}(\cos^2\theta)^\frac{2}{q}\} .$
Let $u\in C(\theta_1)$ and $v\in C(\theta_2).$ 
Then \begin{eqnarray*}
	 u &=& (\cos 2\theta_1+R(\theta_1)\cos \psi_1,R(\theta_1)\sin \psi_1)\\
	 v &=& (\cos 2\theta_2+R(\theta_2)\cos \psi_2,R(\theta_2)\sin \psi_2),
	 \end{eqnarray*} 
	for some $\psi_1,\psi_2 \in [0,2\pi].$\\
	Let $t\in [0,1]$ and $(1-t)u +tv=(\alpha, \beta).$ Then
	\begin{eqnarray*} 
		\alpha&=&(1-t)\cos 2\theta_1+t\cos 2\theta_2+(1-t)R(\theta_1)\cos \psi_1+tR(\theta_2)\cos \psi_2\\
		\beta&=&(1-t)R(\theta_1)\sin \psi_1+tR(\theta_2)\sin \psi_2 .
	\end{eqnarray*}
	We want to show that  $(\alpha, \beta) $ lies on $ C(\theta)$ for some $\theta\in [0,2\pi]$, i.e., there exists $\theta \in [0,2\pi] $ such that $(\alpha-\cos2\theta)^2+\beta^2= R^2(\theta)=\frac{|b|^2}{4}(1-\cos 2\theta)^\frac{2}{p}(1+\cos2\theta)^\frac{2}{q}.$
Consider$ f(\theta)=(\alpha-\cos2\theta)^2+\beta^2-\frac{|b|^2}{4}(1-\cos 2\theta)^\frac{2}{p}(1+\cos2\theta)^\frac{2}{q}.$\\
\noindent Our claim is established if we can exhibit some $\theta \in [0,2\pi]$ for which $f(\theta)=0.$ Now $f(0)=(\alpha-1)^2+\beta^2 \geq 0.$
Choose $\theta_0\in [0,2\pi]$ such that $\cos2\theta_0 = (1-t)\cos 2\theta_1+t\cos 2\theta_2.$ Then it can be seen that
$f(\theta_0)=[(1-t)R(\theta_1)\cos \psi_1+tR(\theta_2)\cos \psi_2)]^2+[(1-t)R(\theta_1)\sin \psi_1+tR(\theta_2)\sin \psi_2]^2-R^2(\theta_0) = (1-t)^2R^2(\theta_1)+t^2R^2(\theta_2)+2t(1-t)R(\theta_1)R(\theta_2)\cos (\psi_1-\psi_2)-R^2(\theta_0).$
So,  $f(\theta_0)\leq [(1-t)R(\theta_1)+tR(\theta_2)]^2-R^2(\theta_0).$\\ 
 
Let $g(z)=\frac{|b|}{2}(1-z)^\frac{1}{p}(1+z)^\frac{1}{q}.$
Clearly, $g(z)\geq 0 $ for all $z\in [-1,1]$ and $g(z)$ is continuous on $[-1,1].$ It is easy to show that $g(z)$ is concave in $[-1,1].$
Therefore $f(\theta_0)\leq [(1-t)g(\cos2\theta_1)+tg(\cos2\theta_2)]^2-[g(\cos2\theta_0)]^2\leq 0$.
Hence $f$ has a solution in $[0,2\pi]$ as $f$ is continuous. Thus $V(T)$ is convex.
\end{proof}

Using  Theorem \ref{convex}, we have the following corollary.
\begin{cor}
	Let $T\in \mathbb{L}(\ell^2_p)$ be such that $T = \begin{pmatrix}
	a & b  \\
	0 & d \\
	\end{pmatrix},$ where $a,b,d\in \mathbb{K}.$ Then $V(T)$ is a convex set.
\end{cor}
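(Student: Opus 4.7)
The plan is to reduce the corollary to Theorem \ref{convex} by using the affine-invariance property $V(\alpha I+\beta T_1)=\alpha+\beta V(T_1)$ from Proposition \ref{prop}(ii). Since an affine image of a convex set is convex, it suffices to write $T$ in the form $\alpha I + \beta S$ where $S$ has the shape already handled.

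First I would split into two cases according to whether $a=d$ or $a\neq d$. If $a=d$, write
\[
T = aI + \begin{pmatrix} 0 & b \\ 0 & 0 \end{pmatrix},
\]
and apply Theorem \ref{2-d} together with Proposition \ref{prop}(ii): the numerical range of the off-diagonal part is a disc about the origin, so $V(T)$ is a disc about $a$, which is convex.

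If $a\neq d$, set $\alpha=\frac{a+d}{2}$, $\beta=\frac{a-d}{2}$, and $b'=\frac{2b}{a-d}$. Then a direct computation gives
\[
T = \alpha I + \beta \begin{pmatrix} 1 & b' \\ 0 & -1 \end{pmatrix}.
\]
By Theorem \ref{convex}, $V\!\left(\begin{smallmatrix} 1 & b' \\ 0 & -1\end{smallmatrix}\right)$ is convex, and Proposition \ref{prop}(ii) then yields $V(T)=\alpha+\beta V\!\left(\begin{smallmatrix} 1 & b' \\ 0 & -1\end{smallmatrix}\right)$, which is the image of a convex set under the affine map $z\mapsto \alpha+\beta z$ on $\mathbb{C}$, hence convex.

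There is essentially no obstacle here; the only subtle point is recognizing that the case $a=d$ must be separated off because the normalization $b'=2b/(a-d)$ fails, and that case is instead covered by Theorem \ref{2-d}. Everything else is a one-line affine change of variables.
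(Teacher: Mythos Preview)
Your proof is correct and matches the paper's argument essentially line for line: the same case split on $a=d$ versus $a\neq d$, the same affine normalization $T=\frac{a+d}{2}I+\frac{a-d}{2}\begin{pmatrix}1 & 2b/(a-d)\\0 & -1\end{pmatrix}$ in the second case, and the same appeal to Proposition~\ref{prop}(ii). The only cosmetic difference is that in the $a=d$ case the paper invokes Theorem~2.3 directly, whereas you unpack it as $aI$ plus the nilpotent matrix and cite Theorem~\ref{2-d}, which is exactly how Theorem~2.3 is obtained anyway.
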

\begin{proof}
 If $a=d$, then by Theorem 2.3, $V(T)$ is convex set.\\ 
 If $a\neq d$, then consider $T' = \begin{pmatrix}
	1 & \frac{2b}{a-d}  \\
	0 & -1 \\
	\end{pmatrix}$. Now $T$ can be written as 
	\[T=\frac{a+d}{2}I+\frac{a-d}{2}T'.\]
	Since $V(T')$ is convex set by Theorem \ref{convex}, then by proposition \ref{prop}, we can conclude that $V(T)$ is a convex set.
\end{proof}

Next, we obtain the equation of boundary of numerical range of the same class of operators.

\begin{theorem}
	Let $T\in \mathbb{L}(\ell^2_p)$ be such that $T = \begin{pmatrix}
	1 & b  \\
	0 & -1 \\
	\end{pmatrix}$. Then the parametric equation of the boundary of $V(T)$ is given by 
	\[x=\cos2\theta +f(\theta)\]
	\[y^2=R^2(\theta)-f^2(\theta),\]
	where
	$f(\theta)=\frac{|b|^2}{2}(\tan^2\theta)^{(\frac{2-p}{p})}[\frac{1}{p}\cos^2\theta-\frac{1}{q}\sin^2\theta]
	,~ R(\theta)=|b|(\sin^2 \theta)^\frac{1}{p}(\cos^2\theta)^\frac{1}{q}~ \mbox{with}~\\\frac{1}{p}+\frac{1}{q}=1 $  and  $\theta\in [0,2\pi]\setminus\{0,\frac{\pi}{2},\pi, \frac{3\pi}{2},2\pi\}.$\\
	Moreover
	$ (\cos2\theta+f(\theta),\pm \sqrt{R^2(\theta)-f^2(\theta)})\in \delta{V(T)} ~\mbox{ if and only if }~ \tan^{\frac{4}{q}} \theta\geq \frac{|b|^2}{4}(\frac{1}{p}-\frac{1}{q}\tan^2\theta)^2.$
\end{theorem}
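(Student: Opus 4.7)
The plan is to apply the classical envelope method to the one-parameter family of circles $\{C(\theta) : \theta \in [0, 2\pi]\}$ provided by Theorem \ref{boundary}. Since $V(T) = \bigcup_\theta C(\theta)$, a point $(x,y)$ on the boundary $\delta V(T)$ must lie on some $C(\theta_0)$ while not being strictly enclosed by nearby $C(\theta)$. Hence it must simultaneously satisfy
\[F(x, y, \theta) := (x - \cos 2\theta)^2 + y^2 - R^2(\theta) = 0 \quad \text{and} \quad \partial F/\partial \theta = 0\]
at $\theta = \theta_0$. The excluded values $\theta \in \{0, \pi/2, \pi, 3\pi/2, 2\pi\}$ correspond precisely to the degenerate situations where $R(\theta) = 0$ (so $C(\theta)$ collapses to a point) or where $\sin 2\theta = 0$ makes the formulas below singular.

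\textbf{Parametric equations.} The condition $\partial F/\partial\theta = 0$ rearranges to $(x - \cos 2\theta)\cdot 2\sin 2\theta = R(\theta)R'(\theta)$. Logarithmic differentiation of $R(\theta) = |b|(\sin^2\theta)^{1/p}(\cos^2\theta)^{1/q}$ yields
\[R'(\theta) = \frac{4R(\theta)}{\sin 2\theta}\left(\frac{1}{p}\cos^2\theta - \frac{1}{q}\sin^2\theta\right),\]
so that $\sin 2\theta$ cancels in the ratio $R R'/(2\sin 2\theta)$. Using the conjugacy identity $(2-q)/q = -(2-p)/p$ (which follows immediately from $1/p + 1/q = 1$) to merge $(\sin^2\theta)^{(2-p)/p}(\cos^2\theta)^{(2-q)/q}$ into $(\tan^2\theta)^{(2-p)/p}$, I obtain
\[x - \cos 2\theta = \frac{|b|^2}{2}(\tan^2\theta)^{(2-p)/p}\left[\frac{1}{p}\cos^2\theta - \frac{1}{q}\sin^2\theta\right] = f(\theta).\]
Substituting back into $F = 0$ gives $y^2 = R^2(\theta) - f^2(\theta)$, as claimed.

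\textbf{Moreover part.} The candidate point $(\cos 2\theta + f(\theta), \pm\sqrt{R^2(\theta) - f^2(\theta)})$ represents a genuine real point of $\delta V(T)$ exactly when $R^2(\theta) \geq f^2(\theta)$. I would compute this difference by factoring out $R^2(\theta)$; using the same conjugacy manipulations, all the $\sin^2\theta$ and $\cos^2\theta$ exponents regroup to produce
\[R^2(\theta) - f^2(\theta) = R^2(\theta)\left[1 - \frac{|b|^2}{4}(\cot^2\theta)^{2/q}\left(\frac{1}{p} - \frac{1}{q}\tan^2\theta\right)^2\right],\]
and the bracket is non-negative iff $\tan^{4/q}\theta \geq \tfrac{|b|^2}{4}(\tfrac{1}{p} - \tfrac{1}{q}\tan^2\theta)^2$. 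The main difficulty is not conceptual but a matter of careful bookkeeping: one must repeatedly invoke $1/p + 1/q = 1$ to merge the fractional exponents of $\sin^2\theta$ and $\cos^2\theta$ into the $\tan^2\theta$ and $\cot^2\theta$ combinations appearing in the statement, and one must track the degenerate parameter values where $\sin 2\theta$ vanishes.
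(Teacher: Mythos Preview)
Your proposal is correct and follows essentially the same route as the paper: both identify $\delta V(T)$ with the envelope of the family $\{C(\theta)\}$ from Theorem~\ref{boundary}, differentiate $(x-\cos 2\theta)^2+y^2=R^2(\theta)$ in $\theta$, solve for $x-\cos 2\theta$, and then read off the reality condition $R^2(\theta)\ge f^2(\theta)$ for the ``moreover'' clause. Your use of logarithmic differentiation for $R'(\theta)$ and the explicit appeal to $(2-q)/q=-(2-p)/p$ make the exponent bookkeeping cleaner than in the paper, but the argument is the same.
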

\begin{proof}
	By Theorem \ref{boundary}, it is clear that the boundary of the numerical range of $T$ is the envelope of the family of the circle $ C(\theta)$, where
	\[C(\theta)= \{(x,y):(x-\cos2\theta)^2+y^2= R^2(\theta),~~\mbox{where}~\theta\in [0,2\pi]\}.\]	
	Now differentiating 
	\[(x-\cos2\theta)^2+y^2=R^2 (\theta)\]
	partially with respect to $\theta$ we get,
	\[2(x-\cos2\theta)2\sin2\theta=2R(\theta) R'(\theta).\]	
	Since $ R'(\theta)=|b|\sin{2\theta}[\frac{1}{p}(\cot\theta)^\frac{2}{q}-\frac{1}{q}(\tan\theta)^\frac{2}{p}] ,$ we get
	\[x=\cos2\theta+\frac{|b|^2}{2}(\tan^2\theta)^{(\frac{2}{p}-1)}[\frac{1}{p}\cos^2\theta-\frac{1}{q}\sin^2\theta].\]
	Note that $x$ is undefined if $\frac{2}{p}-1<0$ and $\theta=0, \pi,2\pi$ or $\frac{2}{p}-1>0$ and $\theta=\frac{\pi}{2},\frac{3\pi}{2}$. In these cases, the coordinate $(x,\pm y)$ does not contribute to the boundary of the numerical range of $T$ as all these conditions gives $R(\theta)=0.$\\
	Also $y$ is undefined if $(R(\theta))^2<(f(\theta))^2~\textit{i.e.}, \tan^{\frac{4}{q}} \theta< \frac{|b|^2}{4}(\frac{1}{p}-\frac{1}{q}\tan^2\theta)^2,$ where 
	$f(\theta)=\frac{|b|^2}{2}(\tan^2\theta)^{(\frac{2-p}{p})}[\frac{1}{p}\cos^2\theta-\frac{1}{q}\sin^2\theta]$.
	Then 
	\[x=\cos2\theta+f(\theta)\]
	\[y^2=R^2(\theta)-f^2(\theta).\]
\end{proof}

\begin{remark}\label{rem1}
	(i) We note that $V(T)$ is the union of family of circles with center at $(\cos2\theta,0)$, i.e., $V(T)$ is the collection of circles whose center lie on $X-$ axis. Therefore, $V(T)$ is symmetric about $X-$ axis. We note the following:\\
	(ii ) If $(x,\pm y)\in \delta{V(T)} ~\mbox{then }~ (f'(\theta)-2\sin2\theta)\neq 0.$\\
	(iii)  If $p=2$, by Theorem \ref{boundary}, we get $x=\cos2\theta(1+\frac{|b|^2}{4})$ and $y^2=\frac{|b|^2}{4}[(\sin2\theta)^2-\frac{|b|^2}{4}(\cos2\theta)^2].$ Then eliminating $\theta$, we get $\frac{x^2}{1+\frac{|b|^2}{4}}+\frac{y^2}{\frac{|b|^2}{4}}=1, $ which is the equation of boundary of the numerical range $ W(T)$ for operator $T = \begin{pmatrix}
	1 & b  \\
	0 & -1 \\
	\end{pmatrix}$ acting on  complex Hilbert space. 
\end{remark}

Next, we give an upper bound for numerical radius of the operator $T = \begin{pmatrix}
		1 & b  \\
		0 & -1 \\
	\end{pmatrix}$. 

\begin{theorem}
	Let $T\in \mathbb{L}(\ell^2_p)$ be such that $ T =
	\begin{pmatrix}
	1 & b \\ 
	0 & -1
	\end{pmatrix}.$ Then $v(T)\leq 1+|b|(\frac{1}{p})^{\frac{1}{p}}(\frac{1}{q})^{\frac{1}{q}}.$
\end{theorem}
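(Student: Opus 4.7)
The strategy is to decompose $T$ as a sum of two operators whose numerical radii are already known, and then invoke the subadditivity consequence of Proposition \ref{prop}(iii).

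Specifically, I would write
\[
T = \begin{pmatrix} 1 & b \\ 0 & -1 \end{pmatrix} = D + bN, \qquad D = \begin{pmatrix} 1 & 0 \\ 0 & -1 \end{pmatrix}, \quad N = \begin{pmatrix} 0 & 1 \\ 0 & 0 \end{pmatrix}.
\]
Here $D$ is diagonal and $bN$ is a scalar multiple of the nilpotent operator studied in Theorem \ref{2-d}.

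From Proposition \ref{prop}(iii), we have $V(T) \subset V(D) + V(bN)$. Taking suprema of absolute values on both sides immediately gives the subadditivity $v(T)\leq v(D)+v(bN)$. Next, Proposition \ref{prop}(ii) (with $\alpha=0$) yields $V(bN)=bV(N)$, hence $v(bN)=|b|v(N)$. So it remains to compute $v(D)$ and $v(N)$.

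For $v(D)$, I would cite the remark in the introduction to Section 2 stating that the numerical radius of a diagonal matrix is the maximum modulus of its diagonal entries; thus $v(D)=\max\{|1|,|{-1}|\}=1$. For $v(N)$, Theorem \ref{2-d} gives exactly $v(N)=(\tfrac{1}{p})^{1/p}(\tfrac{1}{q})^{1/q}$. Combining these,
\[
v(T) \leq v(D) + |b|\,v(N) = 1 + |b|\left(\tfrac{1}{p}\right)^{1/p}\left(\tfrac{1}{q}\right)^{1/q},
\]
which is the desired bound. There is no real obstacle here; the entire argument is a direct assembly of Proposition \ref{prop} and Theorem \ref{2-d}, and the only care needed is to record that the inclusion in Proposition \ref{prop}(iii) transfers to an inequality of numerical radii via the triangle inequality for complex numbers.
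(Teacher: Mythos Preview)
Your proof is correct. The paper takes a more hands-on route: it writes out $[Tx,x]_p=|x_1|^p+bx_2\overline{x_1}|x_1|^{p-2}-|x_2|^p$ explicitly and applies the triangle inequality pointwise, bounding $\big||x_1|^p-|x_2|^p\big|\le |x_1|^p+|x_2|^p=1$ and $\sup_{x\in S_{\ell_p^2}}|x_2||x_1|^{p-1}=(\tfrac{1}{p})^{1/p}(\tfrac{1}{q})^{1/q}$. Your decomposition $T=D+bN$ is the operator-level packaging of exactly the same splitting, but instead of redoing those two estimates you invoke Proposition~\ref{prop}(ii)--(iii), the diagonal-matrix remark, and Theorem~\ref{2-d}. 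This makes your argument entirely modular and free of computation, whereas the paper's version is self-contained but tacitly repeats the maximization already carried out in Theorem~\ref{2-d}.
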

\begin{proof}
Let $x=(x_1,x_2)\in S_{\ell_p^2}$.
Then $[Tx,x]_p=|x_1|^p+bx_2\overline{x_1}|x_1|^{(p-2)}-|x_2|^p$.\\

Therefore, 
\begin{eqnarray*}
v(T)&=&\sup_{x\in S_{\ell_p^2}}\bigg||x_1|^p+bx_2\overline{x_1}|x_1|^{(p-2)}-|x_2|^p\bigg|\\
&\leq & \sup_{x\in S_{\ell_p^2}}\big[|x_1|^p+|b||x_2||x_1|^{(p-1)}+|x_2|^p\big]\\
&=& 1 +|b|(\frac{1}{p})^{\frac{1}{p}}(\frac{1}{q})^{\frac{1}{q}}.
 \end{eqnarray*}
\end{proof}

The numerical range of an operator $T\in\mathbb{L}(\ell^2_p),$ may not be convex. Here we find a class of operators $T\in\mathbb{L}(\ell^2_p)~,~p\neq1,2,\infty$ for which $V(T)$ is not convex. 
\begin{theorem}
Let $T\in\mathbb{L}(\ell^2_p )~,~p\neq1,2,\infty$ be such that 
\[T=\begin{pmatrix}
	a & b  \\
	c & d \\
	\end{pmatrix}; a,d\in\mathbb{R}\setminus \{0\}, b,c\neq 0\]
	and\\ 
	$(i)~~~a+d=0,$\\
	$(ii)~~|b|=|c|,$\\
	$(iii)\Re{(b)}\Im{(c)}+\Re{(c)}\Im{(b)}=0.$\\	
 Then $V(T)$ is not convex.
\end{theorem}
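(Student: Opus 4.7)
The plan is to parametrize $S_{\ell_p^2}$ explicitly, reduce $[Tx,x]_p$ to a formula displaying $V(T)$ as a union of ellipses centred on the real axis, and then use a reflection symmetry to exhibit two points of $V(T)$ whose midpoint must miss $V(T)$.

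First I would compute, using $x=((\cos\theta)^{2/p}e^{i\phi_1},(\sin\theta)^{2/p}e^{i\phi_2})\in S_{\ell_p^2}$ and the abbreviations $A=(\sin\theta)^{2/p}(\cos\theta)^{2/q}$, $B=(\cos\theta)^{2/p}(\sin\theta)^{2/q}$ of Theorem~\ref{boundary}:
\[[Tx,x]_p=a\cos 2\theta+bAe^{i\psi}+cBe^{-i\psi},\qquad \psi=\phi_2-\phi_1,\]
having used $a+d=0$. Conditions (ii) and (iii) force $bc\in\mathbb{R}$ with $|bc|=|b|^2$, so $bc=\varepsilon|b|^2$ for some $\varepsilon\in\{\pm 1\}$; writing $b=|b|e^{i\beta_0}$, $c=\varepsilon|b|e^{-i\beta_0}$ and $\chi=\psi+\beta_0$, the real and imaginary parts separate as
\[\Re[Tx,x]_p=a\cos 2\theta+|b|(A+\varepsilon B)\cos\chi,\qquad \Im[Tx,x]_p=|b|(A-\varepsilon B)\sin\chi.\]
Thus for each $\theta$, $[Tx,x]_p$ traces an ellipse $E(\theta)$ centred at $(a\cos 2\theta,0)$, and $V(T)=\bigcup_{\theta\in[0,\pi/2]}E(\theta)$.

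Next I would exploit the reflection $\theta\mapsto\pi/2-\theta$, which swaps $A$ and $B$ and flips the sign of $\cos 2\theta$: it leaves the shape of each $E(\theta)$ unchanged while mirroring its centre across the imaginary axis. In the case $\varepsilon=+1$ (i.e., $c=\bar b$), the vertical semi-axis $v(\theta)=|b||A-B|$ vanishes at $\theta=\pi/4$ (where $A=B$) and is strictly positive on $(0,\pi/4)\cup(\pi/4,\pi/2)$, so its maximum $v^\ast$ is attained at some $\theta^\ast\in(0,\pi/4)$ and, by symmetry, also at $\pi/2-\theta^\ast$. Choosing $\chi$ appropriately on each ellipse puts the two points $(a\cos 2\theta^\ast,v^\ast)$ and $(-a\cos 2\theta^\ast,v^\ast)$ in $V(T)$; their midpoint is $(0,v^\ast)$, with $a\cos 2\theta^\ast\neq 0$ because $a\neq 0$ and $\theta^\ast\neq\pi/4$.

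Finally I would show $(0,v^\ast)\notin V(T)$: if $(0,v^\ast)\in E(\theta')$ for some $\theta',\chi'$, the imaginary-part equation together with $|\sin\chi'|\leq 1$ and the maximality of $v^\ast$ force $v(\theta')=v^\ast$ and $|\sin\chi'|=1$, hence $\cos\chi'=0$; the real part then collapses to $a\cos 2\theta'=0$, forcing $\theta'=\pi/4$ and contradicting $v(\pi/4)=0$. The main obstacle is the companion case $\varepsilon=-1$ (i.e., $c=-\bar b$): here $v(\theta)=|b|(A+B)$ is maximised at $\pi/4$ by weighted AM--GM, so the mirror-midpoint trick collapses and a separate construction is needed. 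The natural first attempt is to run the analogous symmetry argument on the horizontal semi-axis $|b||A-B|$, which again peaks in the interior when $p\neq 2$, producing a pair of points (no longer on a common horizontal line) whose midpoint must be excluded by the same maximality reasoning; pinning down such a pair is the main technical task of the proof.
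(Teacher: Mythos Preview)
Your decomposition $c=\varepsilon\bar b$ with $\varepsilon\in\{\pm1\}$, your parametrisation, and your description of $V(T)$ as a union of axis-aligned ellipses coincide with the paper's, and for $\varepsilon=+1$ your midpoint argument is exactly the paper's: maximise the vertical semi-axis, use the $\theta\leftrightarrow\pi/2-\theta$ symmetry to place the mirrored pair $(\pm a\cos 2\theta^\ast,v^\ast)$ in $V(T)$, and exclude the midpoint $(0,v^\ast)$ because equality in the imaginary part forces $\theta'=\pi/4$, where the vertical semi-axis vanishes. That part is complete and matches the paper.

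The case $\varepsilon=-1$ is a genuine gap --- in your proposal and, in fact, in the paper's proof as well. The paper does not split into the two signs; it simply asserts that the supremum $G_0$ of the vertical semi-axis is attained at some $(r_0,s_0)$ with $r_0\neq s_0$. But for $c=-\bar b$ one computes $|G|=|b|(A+B)$, and your own weighted AM--GM observation gives $A+B\le 1$ with equality \emph{only} at $\theta=\pi/4$, i.e.\ at $r_0=s_0$. There the centre sits at $H_0=a\cos(\pi/2)=0$, so $(0,G_0)=(0,|b|)$ actually \emph{belongs} to $V(T)$, and the paper's exclusion step $\bigl(\tfrac{0-H}{F}\bigr)^2+\bigl(\tfrac{G_0}{G}\bigr)^2>1$ fails. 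Thus you have correctly identified an obstruction that the paper glosses over. Your suggested repair via the horizontal semi-axis $|b|\,|A-B|$ is a reasonable direction, but as written it does not yet pin down a concrete pair of points whose midpoint is provably excluded; completing (or reformulating) the argument for $\varepsilon=-1$ is the real remaining work.
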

\begin{proof}
By given condition, we can consider $T\in\mathbb{L}(\ell^2_p)$ such that \[T=\begin{pmatrix}
	a & b_1+ib_2  \\
	c_1+ic_2 & -a \\
	\end{pmatrix},\] where $a,b_1,b_2,c_1,c_2 \in \mathbb{R}$ and $b_1c_2+b_2c_1=0.$ \\
	Let $x=(x_1,x_2)\in S_{\ell^2_p}$ and $x_1=r e^{i\phi_{1}},x_{2}=s e^{i\phi_{2}}$, where $r,s\geq 0 ,\phi_{1},\phi_{2}\in [0,2\pi]$.\\
	Then \begin{eqnarray*}
	[Tx,x]_p&=&a|x_1|^p+(b_1+ib_2)x_2\overline{x_1}|x_1|^{(p-2)}+(c_1+ic_2)x_1\overline{x_2}|x_2|^{(p-2)}-a|x_2|^p \\
	&=& ar^p-as^p+(b_1+ib_2)rsr^{(p-2)}e^{i\phi}+(c_1+ic_2)rs s^{(p-2)}e^{-i\phi},
	\end{eqnarray*}
 where $\phi=\phi_{2}-\phi_{1}.$ Let $[Tx,x]_p=h+ik.$
Then \begin{eqnarray*}
 h&=&a(r^p-s^p)+rs(b_1 r^{p-2} +c_1 s^{p-2})\cos\phi+rs(-b_2 r^{p-2} +c_2 s^{p-2})\sin\phi \\
 k&=&rs(b_2 r^{p-2} +c_2 s^{p-2})\cos\phi+rs(b_1 r^{p-2} -c_1 s^{p-2})\sin\phi.
\end{eqnarray*}
Eliminating $\phi$ from $h$ and $k,$ we get a ellipse of the form\\
\begin{eqnarray}
\bigg(\frac{h-H}{F}\bigg)^2+\bigg(\frac{k}{G}\bigg)^2=1,
\end{eqnarray}\\
where
\begin{eqnarray*}
 H&=& a(r^p -s^p)\\
\lambda&=&|b|=|c|\\
 F&=&\frac{\lambda^2 rs(r^{2(p-2)}-s^{2(p-2)})}{[(b_1 r^{p-2} -c_1 s^{p-2})^2+(b_2 r^{p-2} +c_2 s^{p-2})^2]^{\frac{1}{2}}}\\
 G&=&\frac{\lambda^2 rs(r^{2(p-2)}-s^{2(p-2)})}{[(b_1 r^{p-2} +c_1 s^{p-2})^2+(-b_2 r^{p-2} +c_2 s^{p-2})^2]^{\frac{1}{2}}}.
\end{eqnarray*}
It is easy to see that $H, F$ and $G$ are continuous function of $r,s.$
Thus if we define
\[E(r,s):=\bigg\{(x,y):\bigg(\frac{x-H}{F}\bigg)^2+\bigg(\frac{y}{G}\bigg)^2=1\bigg\},\]
then $V (T)=\bigcup_{r^p+s^p=1}E(r,s)$.\\
Therefore, $\big(H+F \cos\psi , G\sin\psi\big)\in V (T)$ for all $\psi \in [0,2\pi]$.\\
Since $\lambda\neq 0$ and $p\neq 2$, then clearly $V (T) \not\subset \mathbb{R}$.\\
Now, since $G$ is a continuous function on a compact set, it attains its supremum. Let $G$ attains it supremum at $(r_0, s_0)$ and its supremum is $G_0.$
Clearly, $r_0s_0\neq 0$ and $r_0\neq s_0.$ Let the value of $H $ and $F$ at $(r_0,s_0)$ is $H_0$ and $F_0,$ respectively. 
Now $(H_0+F_0 \cos\frac{\pi}{2},G_0\sin\frac{\pi}{2})\in V (T)$, i.e.,  $(H_0,G_0)\in V (T).$
Then it is easy to see that\\
$(-H_0,G_0)\in V (T)$.
Since $H_0\neq 0$ and $F_0\neq 0$, we have
\[\bigg(\frac{0-H}{F}\bigg)^2+\bigg(\frac{G_0}{G}\bigg)^2>1~~ \mbox{for}~~ \mbox{all}~~ r,s \geq 0.\]
This shows that $(0,G_0)\notin V (T)$.
Hence  $V (T)$ is not convex. 
\end{proof}

Now, we give a relation between $V(T)$ and $V(T^t)$ considering $T, T^t \in \mathbb{L}(\ell_p^2)$. For this purpose we need the following lemma.
\begin{lemma}\label{lemma2}
Let $T\in \mathbb{L}(\ell^2_p)$ be such that $T = \begin{pmatrix}
	a & b  \\
	c & d \\
	\end{pmatrix}$, where $a,b,c,d \in \mathbb{R}$. Then $V(T)$ is symmetric about $X$-axis.
\end{lemma}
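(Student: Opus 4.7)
The plan is to exploit the fact that complex conjugation of both coordinates fixes the norm on $\ell_p^2$ and, when combined with real entries of $T$, conjugates the semi-inner-product value $[Tx,x]_p$. So I will produce, for each $z \in V(T)$, an explicit witness for $\overline{z} \in V(T)$.

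First I would expand the semi-inner-product. For $x = (x_1,x_2) \in S_{\ell_p^2}$, using the formula $[\tilde{x},\tilde{y}]_p = \sum x_k \overline{y_k}|y_k|^{p-2}/\|\tilde{y}\|_p^{p-2}$ and $\|x\|_p = 1$, one computes
\[
[Tx,x]_p = a|x_1|^p + b\,x_2\overline{x_1}|x_1|^{p-2} + c\,x_1\overline{x_2}|x_2|^{p-2} + d|x_2|^p.
\]
This is the working identity for the whole argument.

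Next I would define $y = (\overline{x_1}, \overline{x_2})$ and observe that $\|y\|_p = \|x\|_p = 1$, so $y \in S_{\ell_p^2}$. Substituting $y$ into the same formula and using that $a,b,c,d$ are real, each term becomes the complex conjugate of the corresponding term for $x$: for example, $b\,\overline{x_2}\,x_1|x_1|^{p-2} = \overline{b\,x_2\overline{x_1}|x_1|^{p-2}}$. Summing gives $[Ty,y]_p = \overline{[Tx,x]_p}$. Hence whenever $h+ik \in V(T)$, one also has $h-ik \in V(T)$, which is exactly symmetry about the $X$-axis.

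I do not anticipate a real obstacle here; the only point to handle carefully is the case $x_1 = 0$ or $x_2 = 0$, where the factor $|x_k|^{p-2}$ could formally be problematic if $p < 2$, but in those cases the corresponding term vanishes by the convention $[\tilde x,\tilde y]_p = 0$ when $\tilde y = 0$, and the conjugation argument goes through unchanged. The proof is therefore essentially a one-line verification that the map $x \mapsto \bar x$ coordinatewise is a norm-preserving involution on $\ell_p^2$ under which $[Tx,x]_p$ gets conjugated when $T$ has real entries.
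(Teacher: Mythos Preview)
Your argument is correct and complete: the coordinatewise conjugation map is a norm-preserving involution on $\ell_p^2$, and for a real matrix $T$ one immediately gets $[T\bar x,\bar x]_p=\overline{[Tx,x]_p}$, whence $V(T)=\overline{V(T)}$.

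This is, however, a genuinely different route from the paper's. The paper parametrises $S_{\ell_p^2}$ by $x_1=(\cos\theta)^{2/p}e^{i\phi_1}$, $x_2=(\sin\theta)^{2/p}e^{i\phi_2}$, expands $[Tx,x]_p$, and shows it equals $h(\theta)+F(\theta)\cos\phi+iG(\theta)\sin\phi$ with $\phi=\phi_2-\phi_1$; thus $V(T)$ is a union over $\theta$ of ellipses $E_\theta$ centred at $(h(\theta),0)$, and symmetry about the real axis follows because each ellipse is so symmetric. Your conjugation argument is shorter, more conceptual, and would extend verbatim to $\ell_p^n$ or indeed to any real operator on a space whose s.i.p.\ satisfies $[\bar u,\bar v]=\overline{[u,v]}$. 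The paper's computation, on the other hand, yields the explicit ellipse description of $V(T)$ as a by-product, which is structural information your proof does not provide (and which the paper uses in spirit in the surrounding results). Both approaches are valid for the lemma as stated.
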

\begin{proof}
Let $x=(x_1,x_2)\in S_{\ell^2_p}$ with  
	$x_1=(\cos\theta)^{\frac{2}{p}} e^{i\phi_1},x_2=(\sin\theta)^{\frac{2}{p}} e^{i\phi_2},~~\mbox{where}~\theta,\phi_1,\\\phi_2 \in [0,2\pi].$
Therefore,  $[Tx,x]_p=h(\theta)+F(\theta)\cos\phi+iG(\theta)\sin\phi~,$
where
\begin{eqnarray*}
\phi&=&\phi_2-\phi_1\\
h(\theta)&=& a(\cos\theta)^2+d(\sin\theta)^2\\
F(\theta)&=&b(\cos\theta)^{\frac{2}{q}}(\sin\theta)^{\frac{2}{p}}+c(\cos\theta)^{\frac{2}{p}}(\sin\theta)^{\frac{2}{q}}\\
G(\theta)&=&b(\cos\theta)^{\frac{2}{q}}(\sin\theta)^{\frac{2}{p}}-c(\cos\theta)^{\frac{2}{p}}(\sin\theta)^{\frac{2}{q}}.
\end{eqnarray*}
Let $E_\theta=\{(x,y):\big(\frac{x-h}{F}\big)^2+\big(\frac{y}{G}\big)^2=1\}.$\\
Therefore, $V(T) $ is the union of the ellipses $E_{\theta}$ with center $(h,0)$, major-axis $F$ and minor-axis $G.$
Hence $V(T)$ is symmetric about $X$-axis.
\end{proof}
Let us relate the numerical range of $T$ and $T^t$ by the following theorem.
\begin{theorem}
	Let $T\in \mathbb{L}(\ell^2_p)$ be such that $T = \begin{pmatrix}
	a & b  \\
	c & d \\
	\end{pmatrix},$ where $a,b,c,d \in \mathbb{R}$. Then $V(T^t)$ is the mirror image of $V(T)$ with respect to the line $x=\frac{a+d}{2}.$ 
\end{theorem}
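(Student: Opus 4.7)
The plan is to exploit the explicit parametrisation of numerical ranges as unions of ellipses that the preceding Lemma already provides for both $T$ and $T^t$.

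Write $T^t=\begin{pmatrix}a & c\\ b & d\end{pmatrix}$; note it has the same real entries as $T$ with $b$ and $c$ interchanged, so Lemma \ref{lemma2} applies to $T^t$ as well. Denote by $E_\theta$ (resp.\ $E_\theta'$) the ellipse associated with $T$ (resp.\ $T^t$) at parameter $\theta$. Using the formulas in Lemma \ref{lemma2}, $E_\theta$ has center $(h(\theta),0)$ and semi-axes $F(\theta),G(\theta)$; the ellipse $E_\theta'$ has the same center function $h(\theta)$ (since $a,d$ are unchanged) and semi-axes $F'(\theta),G'(\theta)$ obtained from $F,G$ by swapping $b$ and $c$.

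The key move is to compare the two parametrisations under the change of variable $\theta\mapsto \tfrac{\pi}{2}-\theta$, which swaps $\cos\theta$ and $\sin\theta$. Direct substitution gives
\[
h\!\left(\tfrac{\pi}{2}-\theta\right)=a\sin^2\theta+d\cos^2\theta=(a+d)-h(\theta),
\]
\[
F\!\left(\tfrac{\pi}{2}-\theta\right)=F'(\theta),\qquad G\!\left(\tfrac{\pi}{2}-\theta\right)=-G'(\theta).
\]
Since only $G^2$ enters the ellipse equation, the sign in the third identity is immaterial. Reflecting a point $(x_0,y_0)$ across the line $x=\tfrac{a+d}{2}$ sends it to $(a+d-x_0,y_0)$; in particular the reflection of the ellipse $E_\theta'$ is the ellipse with center $((a+d)-h(\theta),0)=(h(\tfrac{\pi}{2}-\theta),0)$ and the same pair of semi-axes, which by the three identities above is exactly $E_{\pi/2-\theta}$.

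Taking the union over $\theta\in[0,2\pi]$ and using that $\theta\mapsto \tfrac{\pi}{2}-\theta$ is a bijection of this interval modulo $2\pi$, the reflection of $V(T^t)=\bigcup_\theta E_\theta'$ across the line $x=\tfrac{a+d}{2}$ equals $\bigcup_\theta E_{\pi/2-\theta}=\bigcup_\phi E_\phi=V(T)$, which is the claim. I expect no serious obstacle: the whole argument is a symmetry between the $\cos/\sin$ exponents in $F,G$ together with the elementary reflection identity for $h$. The only point that requires a moment of care is the sign discrepancy in $G$, handled by noting that ellipses depend only on the squares of their semi-axes.
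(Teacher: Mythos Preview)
Your argument is correct, and it takes a somewhat different route from the paper's own proof. The paper establishes the reflection property by passing to the auxiliary operator $T^0=(a+d)I-T^t=\begin{pmatrix}d&-c\\-b&a\end{pmatrix}$ and then showing $V(T)=V(T^0)$ via an explicit point-to-point correspondence at the level of the semi-inner product: for $x=(x_1,x_2)$ yielding $[Tx,x]_p=(\alpha,\beta)$, one takes $y=(x_2,-x_1)$ and verifies by direct computation that $[T^0y,y]_p=(\alpha,\beta)$. The Lemma is invoked only to supply the $X$-axis symmetry, which lets the paper reduce the reflection $(2\lambda_0-\alpha,\beta)$ to checking $(2\lambda_0-\alpha,-\beta)\in V(T^t)$. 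You, by contrast, work entirely with the ellipse decomposition furnished by the Lemma: the substitution $\theta\mapsto \tfrac{\pi}{2}-\theta$ swaps $\cos\theta$ and $\sin\theta$, which simultaneously converts $h$ into $(a+d)-h$ and interchanges the $b$- and $c$-contributions in $F$ and $G$, so that each reflected ellipse of $T^t$ coincides with an ellipse of $T$. Both arguments encode the same underlying coordinate swap, but your version is more geometric and never returns to the raw semi-inner product, whereas the paper's computation makes the correspondence between individual unit vectors explicit.
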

\begin{proof}
	Let $x=(x_1,x_2)\in S_{\ell^2_p}$. Now,
	\begin{eqnarray*}
		[Tx,x]_p&=&a|x_1|^p+d|x_2|^p+bx_2\overline{x_1}|x_1|^{p-2}+cx_1\overline{x_2}|x_2|^{p-2} \\
		&=&a|x_1|^p+d|x_2|^p+b|x_2||x_1|^{p-1}e^{i\phi}+c|x_1||x_2|^{p-1}e^{-i\phi} ,
	\end{eqnarray*}
	where $\phi=\arg(x_2)-\arg(x_1).$\\
	Let $(\alpha,\beta)\in V(T)$. Since $V(T)$ is symmetric about X-axis by Lemma \ref{lemma2}, if we able to show $(2\lambda_0-\alpha,-\beta)\in V(T^t)$,  where $\lambda_0=\frac{a+d}{2}$ and vise-versa then we are done. Using Proposition \ref{prop}, $(2\lambda_0-\alpha,-\beta)\in V(T^t)$ equivalent to $ (\alpha,\beta)\in V(2\lambda_0 I-T^t) $.\\
	Now, \[2\lambda_0 I-T^t=\begin{pmatrix}
	d & -c  \\
	-b & a \\
	\end{pmatrix}
	=T^0(\mbox{say}).\]
	Since $(\alpha,\beta)\in V(T),$ we have
	\begin{eqnarray*}
		\alpha&=&a|x_1|^p+d|x_2|^p+b|x_2||x_1|^{p-1}\cos\phi+c|x_1||x_2|^{p-1}\cos\phi\\
		\beta&=&b|x_2||x_1|^{p-1}\sin\phi-c|x_1||x_2|^{p-1}\sin\phi,
	\end{eqnarray*}
	for some $x=(x_1,x_2)\in S_{\ell^2_p}$ and $\phi=\arg(x_2)-\arg(x_1).$
	Let $y=(y_1,y_2)\in S_{\ell^2_p}.$  Then 
	\begin{eqnarray*}
		[T^0y,y]_p &=&d|y_1|^p+a|y_2|^p-by_1\overline{y_2}|y_2|^{p-2}-cy_2\overline{y_1}|y_1|^{p-2} \\
		&= &d|y_1|^p+a|y_2|^p-b|y_1||y_2|^{p-1}e^{-i\theta}-c|y_2||y_1|^{p-1}e^{i\theta},
	\end{eqnarray*} 
	where $\theta=\arg(y_2)-\arg(y_1)$.
	
	Let $(\alpha',\beta')\in V(T^0) .$ Then 
	\begin{eqnarray*}
	\alpha'&=&d|y_1|^p+a|y_2|^p-b|y_1||y_2|^{p-1}\cos\theta-c|y_2||y_1|^{p-1}\cos\theta \\
	\beta'&=&b|y_1||y_2|^{p-1}\sin\theta-c|y_2||y_1|^{p-1}\sin\theta. 
	\end{eqnarray*}
	Now, if we choose $y_1=x_2$ and $y_2=x_1e^{i\pi}.$
	Then $\theta=\arg(y_2)-\arg(y_1)=\pi+\arg(x_1)-\arg(x_2)=\pi-\phi$.
Therefore,
	\begin{eqnarray*}
		\alpha'&=&a|x_1|^p+d|x_2|^p-b|x_2||x_1|^{p-1}\cos(\pi-\phi)-c|x_1||x_2|^{p-1}\cos(\pi-\phi) \\
		&=& a|x_1|^p+d|x_2|^p+ b|x_2||x_1|^{p-1}\cos\phi+c|x_1||x_2|^{p-1}\cos\phi = \alpha \\  
		\beta'&=&b|x_2||x_1|^{p-1}\sin(\pi-\phi)-c|x_1||x_2|^{p-1}\sin(\pi-\phi) \\
		&=& b|x_2||x_1|^{p-1}\sin\phi-c|x_1||x_2|^{p-1}\sin\phi = \beta.
	\end{eqnarray*}
	Similarly, we can show that for every $(\alpha,\beta)\in V(T^t)$ imply $(2\lambda_0-\alpha,-\beta)\in V(T).$
	This completes the proof of the theorem.
\end{proof}

Next, we obtain the relation between $V(T)$ and $V(T^t)$ considering $ T\in \mathbb{L}(\ell_p^2)$ and $ T^t\in \mathbb{L}(\ell_q^2)$.

\begin{theorem}\label{T^t}
	Let $T\in \mathbb{L}(\ell^2_p)$ and $T^t\in \mathbb{L}(\ell^2_q).$ Then $V(T)=V(T^t)$.
\end{theorem}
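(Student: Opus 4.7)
The plan is to use the natural duality between $\ell_p^2$ and $\ell_q^2$ to biject the defining pairs of $V(T)$ with those of $V(T^t)$. Specifically, I would introduce the duality map $J_p \colon S_{\ell_p^2} \to S_{\ell_q^2}$ defined by
\[
J_p(x_1,x_2) = \bigl(\overline{x_1}|x_1|^{p-2},\, \overline{x_2}|x_2|^{p-2}\bigr),
\]
with the convention that a coordinate equals $0$ whenever $x_k=0$. Since $|J_p(x)_k|^q = |x_k|^{q(p-1)} = |x_k|^p$ (using $q(p-1)=p$), we have $J_p(x) \in S_{\ell_q^2}$ whenever $x \in S_{\ell_p^2}$. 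This $J_p(x)$ is in fact the unique norming functional of $x$, which is exactly what the semi-inner product encodes: for $\|x\|_p = 1$,
\[
[Tx,x]_p = \sum_{k=1}^{2} (Tx)_k \,\overline{x_k}|x_k|^{p-2} = \sum_k (Tx)_k\, J_p(x)_k.
\]

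Next, the crucial algebraic identity is $(p-1)(q-2) = 2-p$, which follows from $pq = p+q$. Setting $y = J_p(x)$, this identity gives, for $x_k \neq 0$,
\[
\overline{y_k}|y_k|^{q-2} = x_k|x_k|^{p-2}\cdot |x_k|^{(p-1)(q-2)} = x_k|x_k|^{p-2}\cdot|x_k|^{2-p} = x_k,
\]
i.e., $J_q(J_p(x)) = x$, which is the expected bidual identification between $(\ell_p^2)^{**}$ and $\ell_p^2$.

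With this in hand, for $x \in S_{\ell_p^2}$ and $y = J_p(x) \in S_{\ell_q^2}$, I would compute
\[
[T^t y,y]_q = \sum_k (T^t y)_k \,\overline{y_k}|y_k|^{q-2} = \sum_k (T^t y)_k\, x_k,
\]
and then invoke the standard transpose identity $\sum_k (T^t y)_k x_k = \sum_k y_k (Tx)_k$ (purely a finite double sum reindexing on the $2\times 2$ matrix) to conclude $[T^t y, y]_q = [Tx,x]_p$. This yields $V(T) \subseteq V(T^t)$, and applying the same argument to $T^t \in \mathbb{L}(\ell_q^2)$ with the roles of $p$ and $q$ swapped (so $J_q$ replaces $J_p$, and $(T^t)^t = T$) gives the reverse inclusion.

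The main technical point is verifying $(p-1)(q-2) = 2-p$ and using it to show $J_q \circ J_p = \mathrm{id}$; everything else is bookkeeping with the bilinear pairing $\sum_k u_k v_k$ between $\ell_p^2$ and $\ell_q^2$. One minor subtlety to handle cleanly is the convention $0\cdot 0^{p-2}=0$ when some $x_k$ vanishes, which ensures all formulas extend by continuity without altering the computation.
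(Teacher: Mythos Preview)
Your argument is correct. The duality map $J_p$ really is a bijection $S_{\ell_p^2}\to S_{\ell_q^2}$ with inverse $J_q$, your exponent identity $(p-1)(q-2)=2-p$ is exactly what makes $J_q\circ J_p=\mathrm{id}$, and the transpose identity $\sum_k (T^t y)_k x_k=\sum_k y_k (Tx)_k$ (a finite reindexing) then gives $[T^t J_p(x),J_p(x)]_q=[Tx,x]_p$. The $0\cdot 0^{p-2}=0$ convention is harmless since the semi-inner product already adopts it.

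The paper proves the same theorem but by a different correspondence. It parametrizes $z\in S_{\ell_p^2}$ as $z=(\cos^{2/p}\theta\,e^{i\theta_1},\sin^{2/p}\theta\,e^{i\theta_2})$ and matches it with $w=(\cos^{2/q}\theta\,e^{i\theta_2},\sin^{2/q}\theta\,e^{i\theta_1})\in S_{\ell_q^2}$, then checks by a direct expansion of $h+ik$ that $[Tz,z]_p=[T^t w,w]_q$. Note that this $w$ is \emph{not} your $J_p(z)$: the moduli agree ($|w_k|=|z_k|^{p-1}$), but the paper swaps the phases $\theta_1\leftrightarrow\theta_2$ whereas your map conjugates them. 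Both choices land on the same numerical value, so both proofs go through. Your route is more structural: it explains the equality via the canonical $\ell_p$--$\ell_q$ pairing and the identity $\langle T^t y,x\rangle=\langle y,Tx\rangle$, and it extends verbatim to $\ell_p^n$ for any $n$ (indeed to $\ell_p$). The paper's route is tied to the two-dimensional polar parametrization and verifies the claim by explicit trigonometric bookkeeping.
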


\begin{proof}
	With out loss of generality, we may assume that $T\in \mathbb{L}(\ell^2_p)$ is of the form 
	\[T=
	\begin{pmatrix}
	|a|e^{i\alpha} & |b|e^{i\beta} \\ 
	\rule{0em}{3ex}|c|e^{i\gamma} & \hphantom{-} |d|e^{i\delta}
	\end{pmatrix} ,\]
	where $\alpha, \beta,\gamma$ and $\delta \in [0, 2\pi].$  
	Then $T^t\in \mathbb{L}(\ell^2_q)$ is of the form 
	\[T^t =\begin{pmatrix}
		|a|e^{i\alpha} & |c|e^{i\gamma} \\ 
		\rule{0em}{3ex}|b|e^{i\beta} & \hphantom{-} |d|e^{i\delta}
	\end{pmatrix} .\]
Now, to show that $V(T)=V(T^t)$, it is sufficient to prove that for every $z\in S_{\ell_p^2}$ there exists $w\in S_{\ell_q^2}$ such that $[Tz,z]_p=[T^tw,w]_q$ and vice-versa. Let $z=(\cos^{\frac{2}{p}} \theta e^{i\theta_1},\sin^{\frac{2}{p}} \theta e^{i\theta_2})\in S_{\ell_p^2},$ where $\theta, \theta_1, \theta_2\in [0,2\pi].$ Then by simple calculation, it can be seen that\\
 $[Tz,z]_p= h +ik$, where
	\begin{eqnarray*}
	h&=& |a|\cos^2\theta \cos \alpha +|d|\sin^2\theta \cos\delta\\
	& &+|b|\sin^{\frac{2}{p}}\theta\cos^{\frac{2}{q}}\theta\cos(\beta+\theta_2-\theta_1 )	+|c|\sin^{\frac{2}{q}}\theta\cos^{\frac{2}{p}}\theta\cos(\gamma+\theta_1-\theta_2 )
	\end{eqnarray*}
	and
	\begin{eqnarray*}
k&=& |a|\cos^2\theta \sin \alpha +|d|\sin^2\theta \sin\delta\\ 
& &+|b|\sin^{\frac{2}{p}}\theta\cos^{\frac{2}{q}}\theta\sin(\beta+\theta_2-\theta_1 )	+|c|\sin^{\frac{2}{q}}\theta\cos^{\frac{2}{p}}\theta\sin(\gamma+\theta_1-\theta_2 ).\\
\end{eqnarray*}
\noindent
Now, if we choose $w\in S_{\ell_q^2}$ such that  $w=(\cos^{\frac{2}{q}} \theta e^{i\theta_2},\sin^{\frac{2}{q}} \theta e^{i\theta_1})$ then $[T^tw,w]_q= h +ik.$
Similarly, we can show that for every $w\in S_{\ell_q^2}$ there exists $z\in S_{\ell_p^2}$ such that $[T^tw,w]_q=[Tz,z]_p.$ This completes the proof of the theorem.
\end{proof}

Finally, we have the following relation between $V(T)$ and $V(T^*)$ considering $ T\in \mathbb{L}(\ell_p^2)$ and $ T^*\in \mathbb{L}(\ell_q^2),$ where $T^*$ is conjugate transpose of $T.$

\begin{theorem}\label{T^*}
	Let $T\in \mathbb{L}(\ell^2_p)$ and $T^*\in \mathbb{L}(\ell^2_q)$, where $\frac{1}{p}+\frac{1}{q}=1.$ Then $V(T)=\overline {V(T^*)}$.
\end{theorem}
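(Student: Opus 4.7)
The plan is to mirror the proof of Theorem \ref{T^t}, exploiting the identity $T^* = \overline{T^t}$ (entry-wise conjugation). Write the entries of $T \in \mathbb{L}(\ell_p^2)$ in polar form as $|a|e^{i\alpha}, |b|e^{i\beta}, |c|e^{i\gamma}, |d|e^{i\delta}$; then the entries of $T^* \in \mathbb{L}(\ell_q^2)$ are these same moduli with the arguments negated and the off-diagonal positions swapped. The goal is to set up a pairing $z \leftrightarrow w$ between $S_{\ell_p^2}$ and $S_{\ell_q^2}$ such that $[T^*w,w]_q = \overline{[Tz,z]_p}$ and vice versa.

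For the inclusion $V(T) \subset \overline{V(T^*)}$, given $z = (\cos^{2/p}\theta\, e^{i\theta_1}, \sin^{2/p}\theta\, e^{i\theta_2}) \in S_{\ell_p^2}$, I would take the companion vector $w = (\cos^{2/q}\theta\, e^{i\theta_1}, \sin^{2/q}\theta\, e^{i\theta_2}) \in S_{\ell_q^2}$, exactly as in the proof of Theorem \ref{T^t}. A direct expansion of $[T^*w,w]_q$ via the explicit $\ell_q$ semi-inner-product should yield the same four terms as $[Tz,z]_p$ (the diagonal $|a|\cos^2\theta$ and $|d|\sin^2\theta$ contributions, and the $|b|$- and $|c|$-cross terms with the familiar $\sin^{2/p}\theta\cos^{2/q}\theta$ and $\sin^{2/q}\theta\cos^{2/p}\theta$ moduli), but with every complex exponential replaced by its conjugate; this is precisely $\overline{[Tz,z]_p}$.

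The symmetric argument, starting from $w \in S_{\ell_q^2}$ and producing $z \in S_{\ell_p^2}$ by the reverse swap of exponents $2/q \leftrightarrow 2/p$ while keeping the phases fixed, should deliver $V(T^*) \subset \overline{V(T)}$, and combining the two inclusions yields $V(T) = \overline{V(T^*)}$.

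The main obstacle, as in Theorem \ref{T^t}, is the phase bookkeeping. The cross term of modulus $|b|$ in $[Tz,z]_p$ carries $e^{i(\beta+\theta_2-\theta_1)}$, whereas in $[T^*w,w]_q$ it arises from the entry $|b|e^{-i\beta}$ multiplied by $e^{i(\phi_1-\phi_2)}$; with the choice $\phi_j = \theta_j$ the latter becomes $e^{-i(\beta+\theta_2-\theta_1)}$, matching the conjugate. The analogous matching holds for the $|c|$-cross term, and once these identifications are verified, the rest of the verification is a routine calculation entirely parallel to the $T^t$ case.
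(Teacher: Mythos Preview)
Your proposal is correct and matches the paper's own proof essentially verbatim: the paper also pairs $z=(\cos^{2/p}\theta\,e^{i\theta_1},\sin^{2/p}\theta\,e^{i\theta_2})\in S_{\ell_p^2}$ with $y=(\cos^{2/q}\theta\,e^{i\theta_1},\sin^{2/q}\theta\,e^{i\theta_2})\in S_{\ell_q^2}$ and checks $[Tz,z]_p=\overline{[T^*y,y]_q}$. One small caveat: this companion vector is not \emph{exactly} the one used in Theorem~\ref{T^t} (there the phases $\theta_1,\theta_2$ are swapped), so your phrase ``exactly as in the proof of Theorem~\ref{T^t}'' is slightly misleading---but the $w$ you actually wrote down is the right one, and keeping the phases fixed is precisely what produces the conjugate.
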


\begin{proof}
	Proceeding similarly as  in the proof of Theorem \ref{T^t}, we can see that for $x= (\cos^{\frac{2}{p}} \theta e^{i\theta_1},\sin^{\frac{2}{p}} \theta e^{i\theta_2})\in S_{\ell_p^2},$ where $\theta, \theta_1, \theta_2 \in [0,2\pi]$ we have $y=(\cos^{\frac{2}{q}} \theta e^{i\theta_1},\sin^{\frac{2}{q}} \theta $ $ e^{i\theta_2})  \in S_{\ell_q^2}$ such that $[Tx,x]_p=\overline{[T^*y,y]_q}.$ Therefore, we conclude that $V(T)=\overline {V(T^*)}.$
\end{proof}
\begin{remark} We know that for $T\in \mathbb{L}(\mathbb{H}),  W(T)=\overline{W(T^*)} .$ If $p=q=2$ then Theorem \ref{T^*} shows that $W(T)=\overline{W(T^*)}.$ 
\end{remark}

\end{document}